\newcommand{\e}{\varepsilon}
   \theoremstyle{plain}%default
   \newtheorem{thm}{Theorem}%[section]
   \newtheorem{lem}[thm]{Lemma}
   \newtheorem{cor}[thm]{Corollary}
   \theoremstyle{definition}
   \newtheorem{example}{Example}
   \theoremstyle{remark}
\author{V. Manuilov}
\date{}
\address{Moscow Center for Fundamental and Applied Mathematics, Moscow State University,
Leninskie Gory 1, Moscow, 
119991, Russia}
\email{manuilov@mech.math.msu.su}
\thanks{The author acknowledges support by the RNF grant 23-21-00068.}
\title{Mapping graph homology to $K$-theory of Roe algebras}
\begin{document}

\begin{abstract}
Given a graph $\Gamma$, one may conside the set $X$ of its vertices as a metric space by assuming that all edges have length one. We consider two versions of homology theory of $\Gamma$ and their $K$-theory counterparts --- the $K$-theory of the (uniform) Roe algebra of the metric space $X$ of vertices of $\Gamma$. We construct here a natural map from homology of $\Gamma$ to the $K$-theory of the Roe algebra of $X$, and its uniform version. We show that, when $\Gamma$ is the Cayley graph of $\mathbb Z$, the constructed maps are isomorphisms.

\end{abstract}

\maketitle

\section{Introduction}

Given a graph $\Gamma$, one may conside the set $X$ of its vertices as a metric space by assuming that all edges have length one. There are various homology theories associated to $\Gamma$, in particular, the uniformly finite homology constructed from chains with uniformly bounded coefficients, and the Borel--Moore homology constructed from unbounded chains. The $K$-theory counterpart is the $K$-theory of the Roe algebra of the metric space $X$ of vertices of $\Gamma$, also with two versions: the uniform Roe algebra $C^*_u(X)$, and the Roe algebra $C^*(X)$. We construct here a natural map from homology of $\Gamma$ to the $K$-theory of the Roe algebra of $X$ (and its uniform version), which can be considered as a naive low-dimensional version of the coarse assembly map \cite{Yu,B-E}. In dimension 0 this construction is trivial, and in dimension 1 it is based on possibility to decompose 1-cycles into infinite paths. We show that when $\Gamma$ is the Cayley graph of $\mathbb Z$ the constructed maps are isomorphisms, although for general graphs these maps may be neither injective nor surjective.

\section{Definitions and notation}

Let $X$ be a uniformly discrete countable metric space, that is, $X$ is a countable set equipped with a metric $d$ such that $\inf_{x\neq y\in X}d(x,y)>0$. We also assume that $X$ is proper, which means that any ball in $X$ contains a finite number of points. We write $l^2(X)$ (resp., $C_c(X)$) for the Hilbert space of square summable complex-valued functions on $X$ (resp., for the algebra of complex-valued functions with finite support on $X$).

For a Hilbert space $H$ we write $\mathbb B(H)$ (resp., $\mathbb K(H)$) for the algebra of all bounded (resp., all compact) operators on $H$. A function $f\in C_c(X)$ defines an operator $M_f\in\mathbb B(l^2(X)\otimes H)$ by $M_f(\varphi\otimes\xi)=f\varphi\otimes\xi$, $\varphi\in l^2(X)$, $\xi\in H$.

Recall the definition of the Roe algebra of $X$ \cite{Roe}.

An operator $T\in\mathbb B(l^2(X)\otimes H)$ is {\it locally compact} if the operators $TM_f$ and $M_fT$ are compact for any $f\in C_c(X)$. It has {\it finite propagation} if there exists some $R>0$ such that $M_fTM_g=0$ whenever the distance between the supports of $f,g\in C_c(X)$ is greater than $R$. If $\dim H=\infty$ then the norm closure of the $*$-algebra of locally compact, finite propagation operators on $l^2(X)\otimes H$ is called the {\it Roe algebra} of $X$ and is denoted by $C^*(X)$. If $\dim H=1$ then local compacness is redundant, and the norm closure of the $*$-algebra of finite propagation operators on $l^2(X)$ is called the {\it uniform Roe algebra} of $X$ and is denoted by $C^*_u(X)$.  
The former is better related to elliptic operators and index theory, while the latter is more tractable (e.g. is unital). Embedding $\mathbb C\subset H$ for an infinitedimensional Hilbert space $H$ induces an embedding $i:C^*_u(X)\to C^*(X)$.

We fix the orthonormal basis on $l^2(X)$ consisting of delta functions $\delta_x$, $x\in X$ defined by $\delta_x(y)=\left\lbrace\begin{array}{cl}1,&\mbox{if\ }y=x;\\0,&\mbox{if\ }y\neq x.\end{array}\right.$ We write $H_x$ for $\delta_x\otimes H\subset l^2(X)\otimes H$ and $P_{H_x}$ for the projection onto $H_x$. Given an operator $T\in\mathbb B(l^2(X)\otimes H)$, we write $T_{xy}$ for $P_{H_x}T|_{H_y}:H_y\to H_x$. This allows to view $T$ as a matrix $(T_{xy})_{x,y\in X}$ with operator-valued entries.

Let $\Gamma$ be a graph with the set of vertices $V(\Gamma)=X$ and the set of edges $E(\Gamma)$. We shall use also the notation $V(\Gamma)=\Gamma^0$ and $E(\Gamma)=\Gamma^1$. We consider $\Gamma$ as a one-dimensional cellular space. In order to introduce a metric on $\Gamma$ we assume that each edge is isometric to $[0,1]$ with the standard metric (in particular, it has length one). Then the distance between two points is defined as the infimum of lengths of all paths that connect these points. This metric can be restricted to the set $X$ of vertices, makking the latter a metric space too. The metric space $\Gamma$ (or $X$) is proper iff $\Gamma$ is locally compact iff each vertex has a finite number of adjacent edges.

There is a standard way back from a discrete metric space to graphs: let $X$ be the set of vertices of a graph. Given $\alpha>0$, we connect two points $x,y\in X$ by an edge if $d(x,y)\leq\alpha$. This gives a family of graphs $\Gamma_\alpha$, $\alpha\in(0,\infty)$. 

We shall assume that the graphs in the paper are oriented, i.e. each edge $e\in E(\Gamma)$ goes from its source vertex $s(e)$ to its target vertex $t(e)$. This orientation can be chosen arbitrarily, our results do not depend on it.

Recall that the group $C_k^{BM}(\Gamma)$ of $k$-dimensional Borel--Moore chains is the abelian group of all formal sums $\sum_{x\in\Gamma^k}\lambda_x\cdot x$, $k=0,1$, $\lambda_x\in\mathbb Z$. The standard differential $d:C_1^{BM}(\Gamma)\to C_0^{BM}(\Gamma)$ is defined by $d(e)=t(e)-s(e)$. The 0-th and the 1-st Borel--Moore homology of $\Gamma$ are given by
$$
H_0^{BM}(\Gamma)=C_0^{BM}(\Gamma)/d C_1^{BM}(\Gamma),
$$
$$
H_1^{BM}(\Gamma)=\{\gamma\in C_1^{BM}(\Gamma):d(\gamma)=0\}.
$$
Details on Borel--Moore homology can be found in \cite{B-M,Spanier} and in \cite{Ranicki}, Appendix A. 

Along with Borel--Moore homology, one can use the uniformly finite homology \cite{Novak-Yu}. A chain $\sum_{x\in\Gamma^k}\lambda_x\cdot x\in C_k^{BM}(\Gamma)$, $k=0,1$, is uniformly finite if there exists $K>0$ such that $|\lambda_x|<K$ for any $x\in\Gamma^k$. Let $C_k^{uf}(\Gamma)\subset C_k^{BM}(\Gamma)$ be the set of uniformly finite chains. Recall that a discrete metric space has bounded geometry if, for every $r>0$, there exists $K_r>0$ such that any ball of radius $r$ has less than $K_r$ points. If $X$ has bounded geometry then for each $r>0$ there exists $K_r$ such that the valence of each $x\in X$ (i.e. the number of edges adjacent to $x$) is less than $K_r$. In this case $d$ maps $C_1^{uf}(\Gamma)$ to $C_1^{BM}(\Gamma)$, hence one can define the uniformly finite homology $H_k^{uf}(\Gamma)$ together with the canonical map $j:H_k^{uf}(\Gamma)\to H_k^{BM}(\Gamma)$, $k=0,1$. 

Relation between the two homology groups can be illustrated by the following example. Let $c=\sum_{x\in X}x\in C_0^{uf}(\Gamma)$. It is shown in Proposition 7.3.4 of \cite{Novak-Yu} that if $\Gamma$ is the Cayley graph of a finitely generated group then $[c]\neq 0$ in $H_0^{uf}(\Gamma)$ iff the group is amenable. On the other hand, it was shown in Proposition 11.1.3 of \cite{Geoghegan} (cf. also \cite{K-M}) that if $\Gamma$ is path connected then $[c]=0$ in $H_0^{BM}(\Gamma)$. 

\section{map from $H_0$ to $K_0$}

Let $c=\sum_{x\in\Gamma}c_x\cdot x\in C_0^{BM}(\Gamma)$, $c_x\in\mathbb Z$. Set $(f_c(x),g_c(x))=\left\lbrace\begin{array}{cl}(p_{c_x},0),&\mbox{if\ }c_x\geq 0;\\(0,p_{-c_x}),&\mbox{if\ }c_x\leq 0,\end{array}\right.$
where $p_k=\sum_{i=1}^ke_i\in\mathbb K(H)$ is the projection onto the first $k$ vectors of the basis $\{e_i\}_{i\in\mathbb N}$ of $H$. Then $f_c,g_c$ are projection-valued maps from $\Gamma$ to $\mathbb K(H)$, hence diagonal elements of $C^*(\Gamma)$. Then $[f_c]-[g_c]\in K_0(C^*(\Gamma))$. 

\begin{lem}
If $c$ is uniformly finite then $f_c,g_c\in M_n(C^*_u(X))$ for some $n$.

\end{lem}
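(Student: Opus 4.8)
The plan is to use uniform finiteness of $c$ to bound uniformly the ranks of the projections $f_c(x)$ and $g_c(x)$, which forces $f_c$ and $g_c$ to be supported on $l^2(X)$ tensored with a fixed finite-dimensional subspace of $H$; on that subspace they become $n\times n$ matrices whose entries are manifestly elements of $C^*_u(X)$.

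First I would pick, using uniform finiteness of $c$, an $n\in\mathbb N$ with $|c_x|\le n$ for every $x\in\Gamma^0$, and put $\mathbb C^n:=\Span\{e_1,\dots,e_n\}\subseteq H$. For each $x$ one has $f_c(x)\in\{p_{c_x},0\}$ and $g_c(x)\in\{p_{-c_x},0\}$, and since $p_k$ is the projection onto $\Span\{e_1,\dots,e_k\}$ with $k\le n$ throughout, the ranges of $f_c(x)$ and $g_c(x)$ are contained in $\mathbb C^n$. Hence, acting on $\delta_x\otimes\xi$ by $\delta_x\otimes f_c(x)\xi$, the diagonal operator $f_c$ carries $l^2(X)\otimes\mathbb C^n$ into itself and kills $l^2(X)\otimes(\mathbb C^n)^\perp$; likewise for $g_c$. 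So both operators are completely determined by their restrictions to $l^2(X)\otimes\mathbb C^n$.

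Next I would identify $\mathbb B(l^2(X)\otimes\mathbb C^n)$ with $M_n(\mathbb B(l^2(X)))$ by means of the basis $e_1,\dots,e_n$ of $\mathbb C^n$. Since every $p_k$ is diagonal with respect to $\{e_i\}$, the operator $f_c$ is block-diagonal under this identification, its $i$-th diagonal block being the operator $D_i\in\mathbb B(l^2(X))$ with $D_i\delta_x=\delta_x$ when $c_x\ge i$ and $D_i\delta_x=0$ otherwise; for $g_c$ one gets the same description with the condition $c_x\le -i$. Now each $D_i$ has norm $\le 1$ and is diagonal with respect to the orthonormal basis $\{\delta_x\}_{x\in X}$, hence has propagation $0$, hence is a finite-propagation operator on $l^2(X)$ and therefore lies in $C^*_u(X)$ directly from the definition. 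Consequently the $n\times n$ block matrix representing $f_c$, and the one representing $g_c$, have all entries in $C^*_u(X)$, i.e.\ $f_c,g_c\in M_n(C^*_u(X))$; and under the embedding $M_n(C^*_u(X))\hookrightarrow C^*(\Gamma)$ coming from $\mathbb C^n\subseteq H$ (the $n$-fold analogue of the embedding $i$) these matrices are exactly the diagonal operators $f_c,g_c$ of the preceding paragraph.

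I do not expect a genuine obstacle here. The single point that needs care is that the uniform bound $|c_x|\le n$ supplied by uniform finiteness is precisely what permits replacing $H$ by the fixed finite-dimensional subspace $\mathbb C^n$, thereby converting the ``infinite matrix over $\mathbb K(H)$'' picture used in this section into an honest matrix of size $n$ over $C^*_u(X)$; once that reduction is made, membership in the uniform Roe algebra is immediate because every operator occurring has propagation zero (in particular, no bounded-geometry hypothesis on $X$ is used for this statement).
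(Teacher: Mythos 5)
Your proposal is correct and follows essentially the same route as the paper: bound $|c_x|\le n$ uniformly, observe that $f_c(x),g_c(x)$ are then projections inside the fixed corner $M_n\subset\mathbb K(H)$, and identify the resulting operators as elements of $M_n(C^*_u(X))$. You merely spell out in more detail (via the propagation-zero diagonal blocks $D_i$) what the paper states in one line.
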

\begin{proof}
If $\max_{x\in X}|c_x|\leq K$ then $f_c(x),g_c(x)\in M_K\subset\mathbb K(H)$, hence $f_c,h_c\in M_K(C^*_u(X))$.
\end{proof}

\begin{lem}
If $c=d(\gamma)$, $\gamma\in C_1^{BM}(\Gamma)$, then $[f_c]=[g_c]$.

\end{lem}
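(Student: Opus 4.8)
The plan is to realise the $1$-chain $\gamma$ as a multiset of oriented edges and to encode it in a single partial isometry of propagation at most one that compares $f_c$ and $g_c$ up to a common ``envelope'' projection; additivity of $K_0$ then finishes the argument. Writing $\gamma=\sum_e\gamma_e\cdot e$, I would first discard all loops (they contribute nothing to $d(\gamma)$) and then replace each remaining edge $e$ by $|\gamma_e|$ copies of it, oriented $s(e)\to t(e)$ if $\gamma_e>0$ and $t(e)\to s(e)$ if $\gamma_e<0$. This produces a multiset $\mathcal E$ of oriented edges, which I shall call \emph{strands}, with $c_x=\#\{\vec e\in\mathcal E:t(\vec e)=x\}-\#\{\vec e\in\mathcal E:s(\vec e)=x\}$ at every vertex $x$; since $\Gamma$ is proper, each vertex has finite valence, so only finitely many strands meet a given $x$.

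Next I would build the comparison operator. At each vertex $x$, injectively assign distinct basis vectors of $H$ to the finitely many strands starting at $x$, and, separately, to the strands ending at $x$, with the following normalisation: if $c_x\geq 0$, let the ending strands occupy $e_1,\dots,e_m$ (with $m$ their number) and the starting strands occupy $e_{c_x+1},\dots,e_m$; if $c_x<0$, symmetrically let the starting strands occupy $e_1,\dots,e_{m'}$ and the ending ones occupy $e_{-c_x+1},\dots,e_{m'}$ (with $m'$ the number of starting strands). The count in Step~1 guarantees these ranges make sense. For each strand $\vec e$ let $u_{\vec e}$ be the rank-one partial isometry carrying its vector at $s(\vec e)$ to its vector at $t(\vec e)$, and put $W=\sum_{\vec e\in\mathcal E}u_{\vec e}$. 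Using that the end-vectors at any fixed vertex are mutually orthogonal, and likewise the start-vectors, a short check shows that $W$ is a well-defined partial isometry whose source projection $P_s=W^*W$ and range projection $P_r=WW^*$ are the diagonal projections whose restriction to $H_x$ is the span of the start-vectors, resp. end-vectors, at $x$; moreover $W$ has propagation $\le 1$ and is locally compact by properness, so $W\in C^*(\Gamma)$, and likewise $P_s,P_r\in C^*(\Gamma)$.

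Finally I would read off the relation to $f_c$ and $g_c$. The normalisation was chosen precisely so that, on each $H_x$, one has the orthogonal identity $P_r|_{H_x}+g_c(x)=P_s|_{H_x}+f_c(x)$ with $P_s|_{H_x}\perp f_c(x)$ and $P_r|_{H_x}\perp g_c(x)$; this is checked by splitting into the cases $c_x\geq 0$ and $c_x<0$, the point being that $f_c(x)=p_{c_x}$ (resp. $g_c(x)=p_{-c_x}$) measures exactly the excess of end-vectors over start-vectors at $x$. Summing over $x$ yields a projection $Q:=P_s+f_c=P_r+g_c$ in $C^*(\Gamma)$, whence $[P_s]+[f_c]=[Q]=[P_r]+[g_c]$ in $K_0(C^*(\Gamma))$; since $W^*W=P_s$ and $WW^*=P_r$ give $[P_s]=[P_r]$, the claim $[f_c]=[g_c]$ follows.

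The step I expect to be the real obstacle is conceptual rather than computational: one must resist telescoping the strands along the paths of $\mathcal E$, since for a $\gamma$ supported on an infinite path that would force the comparison operator to have unbounded propagation and leave $C^*(\Gamma)$ --- this is exactly the ``decomposition of $1$-cycles into infinite paths'' alluded to in the introduction. Keeping $W$ as one operator of propagation $\le 1$ is what makes the argument work, at the price that one only obtains $Q-f_c\sim Q-g_c$ rather than $f_c\sim g_c$ directly, which is harmless at the level of $K$-theory classes.
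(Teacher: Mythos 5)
Your proof is correct and follows essentially the same route as the paper: reorient edges so all coefficients are non-negative, duplicate each edge $\gamma_e$ times, and build a propagation-one partial isometry along the resulting strands whose source and range projections absorb $f_c$ and $g_c$ into a common projection, so that the two classes agree in $K_0$. Your explicit normalisation of the basis vectors at each vertex in fact spells out the step the paper passes over quickly (its claim $[f]-[g]=[f']-[g']$), but the underlying construction is identical.
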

\begin{proof}
Let $\gamma=\sum_{e\in\Gamma^1}\gamma_e e$, $\gamma_e\in\mathbb Z$. By changing orientation we may assume that each $\gamma_e$ is non-negative. Also for simplicity consider a new oriented graph $G$, which has the same vertices as $\Gamma$, but each edge $e$ is replaced by $\gamma_e$ edges going from $s(e)$ to $t(e)$. For $x\in G^0=\Gamma^0$, let $i(x)$ and $o(x)$ denote the number of ingoing and outgoing edges in $G$. Then $i(x)=\sum_{e\in\Gamma^1,t(e)=x}\gamma_e$, $o(x)=\sum_{e\in\Gamma^1,s(e)=x}\gamma_e$.

We have $c_x=\sum_{e\in\Gamma^1,t(e)=x}\gamma_e-\sum_{e\in\Gamma^1,s(e)=x}\gamma_e=i(x)-o(x)$. Let $f'(x)=p_{i(x)}$, $g'(x)=p_{o(x)}$, then $[f]-[g]=[f']-[g']$, and it remains to show that the projections $f'$ and $g'$ are equivalent in $C^*(X)$.

Write $H_x=A_x\oplus B_x\oplus(A_x\oplus B_x)^\perp$, where $A_x$ and $B_x$ are subspaces with orthogonal bases $\{a_{x,e}:e\in G^1,s(e)=x\}$ and $\{b_{x,e}:e\in G^1,t(e)=x\}$ respectively. Then $\dim A_x=i(x)$, $\dim B_x=o(x)$. Define $V:\oplus_{x\in X}H_x\to\oplus_{x\in X}H_x$ by $Va_{x,e}=b_{t(e),e}$ and $V\xi=0$ for any $\xi\in H_x\ominus A_x$. Then $V$ is a partial isometry, and $V^*V$, $VV^*$ are projections onto $\oplus_{x\in X}A_x$ and $\oplus_{x\in X}B_x$ respectively. Clearly, $f'$ is equivalent to $V^*V$, and $g'$ is equivalent to $VV^*$. 
\end{proof}

\begin{cor}
The map $\varphi_0:H_0^{BM}(\Gamma)\to K_0(C^*(X))$ given by $\varphi_0(c)=[(f_c,g_c)]$ is well defined. Its restriction to uniformly finite chains induces a map $\varphi'_0:H_1^{uf}(\Gamma)\to K_0(C^*_u(X))$.

\end{cor}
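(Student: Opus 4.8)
The plan is to deduce the corollary formally from the two preceding lemmas, once the assignment $c\mapsto[f_c]-[g_c]$ has been promoted to a group homomorphism on the chain group $C_0^{BM}(\Gamma)$. The only step that is not purely formal will be the uniform version, where the equivalence of projections produced in the proof of the second lemma must be realized inside a matrix algebra over $C^*_u(X)$ with a matrix size independent of the vertex; this is where I expect the real work --- and the use of bounded geometry --- to lie.

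First I would verify that $\varphi_0$ is a well-defined group homomorphism already on the chain group $C_0^{BM}(\Gamma)$. As noted just before the lemmas, $f_c$ and $g_c$ are block-diagonal operators with finite-rank blocks, hence locally compact and of propagation $0$ --- genuine projections in $C^*(X)$ --- so $[f_c]-[g_c]\in K_0(C^*(X))$ is defined. On the submonoid of non-negative chains, those $a=\sum_x a_x\cdot x$ with all $a_x\geq 0$, one has $g_a=0$ and $a\mapsto[f_a]$ is a monoid homomorphism into $K_0(C^*(X))$: vertexwise, $f_{a+b}$ and $f_a\oplus f_b$ are intertwined by the standard equivalences $p_{m+n}\sim p_m\oplus p_n$, and assembling these finite-rank partial isometries blockwise over $X$ produces a propagation-$0$, locally compact partial isometry in $M_2(C^*(X))$ that realizes $[f_{a+b}]=[f_a]+[f_b]$. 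Since every chain is a difference of two non-negative ones, the group completion of this submonoid is $C_0^{BM}(\Gamma)$ itself, so the monoid homomorphism extends uniquely to a group homomorphism on $C_0^{BM}(\Gamma)$; and because $f_c=f_{c^+}$ and $g_c=f_{c^-}$ as operators, where $c^\pm$ are the positive and negative parts of $c$, this extension is precisely $c\mapsto[f_c]-[g_c]=\varphi_0(c)$.

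Next, since $H_0^{BM}(\Gamma)=C_0^{BM}(\Gamma)/dC_1^{BM}(\Gamma)$, to pass to homology it suffices, by additivity, that $\varphi_0$ annihilate boundaries; and $\varphi_0(d(\gamma))=[f_{d(\gamma)}]-[g_{d(\gamma)}]=0$ is precisely the content of the second lemma. This gives the first assertion. For the uniform statement, restrict $\varphi_0$ to $C_0^{uf}(\Gamma)$: by the first lemma, if $\max_x|c_x|\leq K$ then $f_c,g_c\in M_K(C^*_u(X))$, so $\varphi'_0(c):=[f_c]-[g_c]\in K_0(C^*_u(X))$ is defined and is carried to $\varphi_0(c)$ by $i_*:K_0(C^*_u(X))\to K_0(C^*(X))$; additivity is checked just as above, the partial isometries now being block-diagonal with scalar-matrix blocks, hence lying in matrix algebras over $C^*_u(X)$.

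The remaining point --- the one I expect to be the main obstacle --- is descent to $H_0^{uf}(\Gamma)=C_0^{uf}(\Gamma)/dC_1^{uf}(\Gamma)$: one must check that when $c=d(\gamma)$ with $\gamma$ itself uniformly finite, the equality $[f_c]=[g_c]$ holds already in $K_0(C^*_u(X))$, not merely in $K_0(C^*(X))$. For this I would re-run the proof of the second lemma while tracking matrix sizes: under bounded geometry, if $|\gamma_e|\leq K'$ and every vertex has valence at most $D$, then $i(x)=\sum_{t(e)=x}\gamma_e$ and $o(x)=\sum_{s(e)=x}\gamma_e$ are bounded by $K'D$, so the auxiliary projections $f',g'$ of that proof are scalar-matrix-valued of uniformly bounded rank; the partial isometry $V$ there sends a vector supported at a vertex $x$ to one supported at the adjacent vertex $t(e)$, hence has propagation $\leq 1$ and uniformly bounded finite-rank blocks; and the vertexwise equivalences $f_c\sim f'$, $g_c\sim g'$ have propagation $0$. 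Composing, the whole equivalence lives in $M_N(C^*_u(X))$ for some $N=N(K',D)$, so $[f_c]=[g_c]$ in $K_0(C^*_u(X))$ and $\varphi'_0$ descends to the uniformly finite homology group $H_0^{uf}(\Gamma)$, which is the second assertion.
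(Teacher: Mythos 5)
Your proof is correct and follows the route the paper intends: the corollary is given there without proof, as a formal consequence of the two preceding lemmas, and you supply exactly the missing glue --- additivity of $c\mapsto[f_c]-[g_c]$ via $c=c^+-c^-$ and blockwise propagation-zero equivalences, together with the observation that the partial isometry $V$ from the second lemma has propagation $\leq 1$ with uniformly bounded finite-rank blocks when $\gamma$ is uniformly finite and $X$ has bounded geometry, so that the equivalence already holds in $M_N(C^*_u(X))$. That last point (a uniform version of the second lemma) is genuinely needed for the descent to $H_0^{uf}(\Gamma)$ and is left implicit in the paper, so spelling it out is a useful completion rather than a deviation.
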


\section{Map from $H_1$ to $K_1$}

Now let us construct a map $\varphi_1:H^{BM}_1(\Gamma)\to K_1(C^*(X))$. Let $\gamma=\sum_{e\in\Gamma^1}\gamma_e e\in C_0^{BM}(\Gamma)$ satisfy $d\gamma=0$. The latter means that 
\begin{equation}\label{i=o}
i(x)=\sum_{e\in\Gamma^1,t(e)=x}\gamma_e=\sum_{e\in\Gamma^1,s(e)=x}\gamma_e=o(x). 
\end{equation}
Denote the number in (\ref{i=o}) by $n_x$.

As before, we assume that all $\gamma_e$ are non-negative and pass from $\Gamma$ to $G$. For $x\in\Gamma^0$, the sets $I(x)=\{e\in G^1:t(e)=x\}$ and $O(x)=\{e\in G^1:s(e)=x\}$ have the same cardinality $n_x$, hence we may fix a bijection 
\begin{equation}\label{alpha}
\alpha_x:I(x)\to O(x). 
\end{equation}

Note that as $X$ is proper, the set $G^1$ is countable. Then we may assume that there is an orthonormal basis $\{\e_e\}_{e\in G^1}$ of $H$ numbered by the edges of $G$.

Define a map $f=f_\alpha:X\times\ G^1\to X\times G^1$ by 
$$
F(x,e)=(y(x,e),f(x,e))=\left\lbrace\begin{array}{cl}(t(\alpha_x(e)),\alpha_x(e)),&\mbox{if\ }t(e)=x;\\
(x,e),&\mbox{otherwise,}\end{array}\right.
$$ 
and an operator $U_\gamma=U_\gamma^{(\alpha)}$ on $l^2(X)\otimes H$ by 
$$
U(\delta_x\otimes\e_e)=\delta_{y(x,e)}\otimes \e_{f(x,e)}.
$$ 

\begin{lem}
$U_\gamma$ is unitary, and $U_\gamma-1\in C^*(X)$.

\end{lem}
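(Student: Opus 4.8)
The plan is to verify the three claims in turn: $U_\gamma$ is an isometry, it is surjective (hence unitary), and $U_\gamma - 1$ lies in the Roe algebra $C^*(X)$.

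First I would check that $U_\gamma$ is a well-defined unitary. The map $F\colon X\times G^1\to X\times G^1$ sends the orthonormal basis vector $\delta_x\otimes\e_e$ to another basis vector $\delta_{y(x,e)}\otimes\e_{f(x,e)}$, so it suffices to show $F$ is a bijection. It is a bijection because it is built from the bijections $\alpha_x\colon I(x)\to O(x)$: on the "fibre" of edges $e$ with $t(e)=x$ it acts as $e\mapsto \alpha_x(e)$ together with the bookkeeping change of base point $x\mapsto t(\alpha_x(e)) = s(e')$ where $e' = \alpha_x(e)\in O(s(e'))$... \emph{wait}, one must be slightly careful about which vertex indexes the new basis vector. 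The point is that every pair $(x,e)$ falls into exactly one of the two cases (either $t(e)=x$ or $t(e)\neq x$), and the inverse is given by $\alpha_x^{-1}$ on the corresponding fibre at the appropriate vertex; I would spell this out by exhibiting an explicit two-sided inverse $G(x,e)$. Since $F$ permutes the orthonormal basis $\{\delta_x\otimes\e_e\}_{(x,e)\in X\times G^1}$, the operator $U_\gamma$ extends to a unitary on $l^2(X)\otimes H$.

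Next I would show $U_\gamma - 1 \in C^*(X)$, which amounts to checking that $U_\gamma$ has finite propagation and that $U_\gamma - 1$ is locally compact. For finite propagation: if $t(e)\neq x$ then $(U_\gamma-1)(\delta_x\otimes\e_e)=0$, and if $t(e)=x$ then the base point moves from $x$ to $t(\alpha_x(e))$, which is the target of an edge whose source is $x$; hence $d\bigl(x, t(\alpha_x(e))\bigr)\le 1$ in $\Gamma$. So $(U_\gamma-1)_{yx}=0$ whenever $d(x,y)>1$, giving propagation at most $1$. For local compactness of $U_\gamma - 1$: fix $f\in C_c(X)$, say supported on a finite set $S\subset X$. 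Then $M_f(U_\gamma-1)$ and $(U_\gamma-1)M_f$ are supported, in the matrix picture, on the rows/columns indexed by $S$ together with the $1$-neighbourhood of $S$, which is again finite since $X$ is proper; moreover on each block $H_x$ the operator $U_\gamma - 1$ only mixes the finitely many basis vectors $\e_e$ with $t(e)=x$ and the finitely many edges in the image — here I use that each vertex has finite valence (properness of $\Gamma$), so $n_x<\infty$ and only finitely many coordinates of $H$ are involved at each vertex. Hence $M_f(U_\gamma-1)$ and $(U_\gamma-1)M_f$ are finite-rank, in particular compact. Combining, $U_\gamma-1$ is a locally compact, finite-propagation operator, so it belongs to $C^*(X)$.

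The main obstacle I expect is purely bookkeeping: keeping straight, in the formula for $F$, which vertex the output basis vector is attached to and confirming that $F$ is genuinely a bijection of $X\times G^1$ rather than merely injective on each fibre. Once the base-point shift is correctly identified as moving within distance $1$ and the per-vertex action is seen to involve only finitely many edge-coordinates (by finite valence), the finite-propagation and local-compactness estimates are routine. No analytic subtlety arises beyond these finiteness observations, so the proof is essentially a careful unwinding of the definitions of $F$ and of $C^*(X)$.
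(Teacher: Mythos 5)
Your proof is correct and follows essentially the same route as the paper: $F$ permutes the orthonormal basis $\{\delta_x\otimes\e_e\}$ (hence $U_\gamma$ is unitary), the base point moves only to adjacent vertices (propagation $\le 1$), and only the finitely many edges at each vertex are disturbed (so the matrix entries of $U_\gamma-1$ are finite rank, giving local compactness). The bijectivity worry you flag does resolve: case 1 of $F$ maps the set $\{(t(e),e):e\in G^1\}$ onto itself via the bijection $e\mapsto\alpha_{t(e)}(e)$ of $G^1=\bigsqcup_x I(x)=\bigsqcup_x O(x)$, and case 2 fixes the complement.
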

\begin{proof}
The map $F$ is a bijection, hence $U_\gamma$ is unitary. If $x,y\in X$ are not adjacent in $G$ then $(U_\gamma)_{xy}=0$.  For any $x\in X$ there are only finitely many edges with $s(e)=x$, hence $y(x,e)\neq x$ for finitely many edges, i.e. $(U_\gamma-1)_{xy}$ is of finite rank.
\end{proof}

The above construction depends on choice of the bijections $\{\alpha_x\}_{x\in X}$ (\ref{alpha}). Let $\{\alpha_x\}_{x\in X}$, $\{\beta_x\}_{x\in X}$ be two sets of bijections between $I(x)$ and $O(x)$, and let $U_\gamma^{(\alpha)}$ and $U_\gamma^{(\beta)}$ be the corresponding unitaries.

\begin{lem}
The unitaries $U_\gamma^{(\alpha)}$ and $U_\gamma^{(\beta)}$ lie in the same connected component of the unitary group of the unitalization of $C^*(X)$.

\end{lem}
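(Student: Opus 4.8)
The plan is to connect $U_\gamma^{(\alpha)}$ to $U_\gamma^{(\beta)}$ through a continuous path of unitaries in the unitalization of $C^*(X)$ by interpolating between the two families of bijections one vertex at a time, and then show this vertex-by-vertex homotopy assembles into a single norm-continuous path. The key observation is that for a fixed vertex $x$, the bijections $\alpha_x,\beta_x:I(x)\to O(x)$ differ by the permutation $\beta_x\circ\alpha_x^{-1}$ of the finite set $O(x)$; since $n_x$ is finite, the unitary implementing this permutation on the finite-dimensional space $\Span\{\e_e : e\in O(x)\}$ lies in a compact connected unitary group $U(n_x)$, hence is connected to the identity by a path $t\mapsto w_x(t)$ with $w_x(0)=\id$, $w_x(1)$ the permutation unitary, and $\|w_x(t)-\id\|$ bounded (in fact $\leq 2$) uniformly in $x$.

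First I would set up the comparison carefully: both $U_\gamma^{(\alpha)}$ and $U_\gamma^{(\beta)}$ act as the shift $\delta_x\otimes\e_e\mapsto\delta_{t(e')}\otimes\e_{e'}$ for the appropriate outgoing edge $e'$, and they differ precisely by postcomposition with the block-diagonal unitary $W=\bigoplus_{x\in X} w_x(1)$ that permutes, within each $H_x$, the basis vectors indexed by $O(x)$ according to $\beta_x\circ\alpha_x^{-1}$. Concretely $U_\gamma^{(\beta)} = \widetilde{W}\, U_\gamma^{(\alpha)}$ for a suitable such $W$ (one should check the bookkeeping: the permutation naturally lives on the outgoing edges at the target vertices, so $\widetilde W$ is again block-diagonal with blocks of size $n_x$). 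Since $W$ has propagation zero (it is diagonal in the $X$-variable) and each block is finite rank, $W-1\in C^*(X)$. Then $W_t := \bigoplus_{x\in X} w_x(t)$ is a path from $1$ to $W$; it has propagation zero for every $t$, and $\|W_t - 1\| = \sup_x \|w_x(t)-\id\|$. Choosing the paths $w_x$ with a common modulus of continuity — e.g. via the functional calculus $w_x(t) = \exp(t\log_{(0,2\pi)} \text{(permutation unitary)})$, whose blocks all have norm-derivative bounded by $2\pi$ — makes $t\mapsto W_t$ norm-continuous into $C^*(X)^+$. Hence $t\mapsto \widetilde{W_t}\,U_\gamma^{(\alpha)}$ is a continuous path of unitaries in $C^*(X)^+$ from $U_\gamma^{(\alpha)}$ to $U_\gamma^{(\beta)}$.

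The main obstacle is uniformity in the norm topology: a priori the blocks $w_x(t)$ live in groups $U(n_x)$ of unbounded dimension as $n_x$ can grow with $x$, and an arbitrary choice of connecting path in each block could have wildly varying speed, so the assembled path need not be norm-continuous. This is resolved by the explicit exponential choice above, which gives a path whose distance from the identity and whose Lipschitz constant are controlled independently of $n_x$ (the spectrum of any permutation unitary lies on the unit circle, and $\|\log_{(0,2\pi)}(u)\|\le 2\pi$ for any unitary $u$). One should also note $W_t - 1 \in C^*(X)$ for each $t$: the operator $W_t-1$ is block-diagonal with finite-rank blocks and zero propagation, so it is a norm limit of finite-propagation locally compact operators (truncate to finitely many vertices), giving membership in $C^*(X)$; therefore the whole path stays in the unitary group of $C^*(X)^+$, as required.
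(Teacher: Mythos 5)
Your overall strategy is the same as the paper's: write the discrepancy between $U_\gamma^{(\alpha)}$ and $U_\gamma^{(\beta)}$ as multiplication by a unitary that splits into an infinite direct sum of finite-dimensional permutation blocks, and connect that unitary to $1$ by a norm-continuous path inside $C^*(X)^+$. Your insistence on a uniform Lipschitz bound for the blockwise exponential paths is a genuine improvement in rigour over the paper, which only asserts that an infinite direct sum of ``unit plus finite rank'' unitaries is connected to $1$. The paper differs in that it factors the correction into \emph{two} conjugations: one by $V=\oplus_x v_x$ acting fibrewise on the $H$-tensor factor (fixing the edge label $\alpha_x(e)\mapsto\beta_x(e)$), and one by $W=\oplus_e w_e$ acting on $l^2(X)\otimes\e_e$ for each fixed edge (fixing the target vertex); you do both at once.

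There is, however, a concrete error in your bookkeeping, exactly at the spot you flagged. The correcting unitary $\widetilde W=U_\gamma^{(\beta)}\bigl(U_\gamma^{(\alpha)}\bigr)^*$ is \emph{not} diagonal in the $X$-variable and does \emph{not} have propagation zero: it must send $\delta_{t(\alpha_x(e))}\otimes\e_{\alpha_x(e)}$ to $\delta_{t(\beta_x(e))}\otimes\e_{\beta_x(e)}$, and the two outgoing edges $\alpha_x(e),\beta_x(e)\in O(x)$ generally have \emph{different} target vertices, so $\widetilde W$ moves vectors between distinct fibres $H_y$. A unitary of the form $\oplus_x(1\otimes w_x)$ cannot do this. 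The correct statement is that $\widetilde W$ permutes, for each $x$, the finite set $\{\delta_{t(e')}\otimes\e_{e'}:e'\in O(x)\}$ according to $\beta_x\circ\alpha_x^{-1}$ and fixes all other basis vectors; these blocks are finite-dimensional but spread over the neighbours of $x$, so $\widetilde W-1$ has propagation at most $2$ (not $0$) and is locally of finite rank. This does not damage your proof --- finite propagation plus local compactness is all that membership in $C^*(X)$ requires, and the blockwise exponential path $\widetilde W_t$ retains the same propagation bound and the same uniform Lipschitz constant --- but the sentence ``$W$ has propagation zero (it is diagonal in the $X$-variable)'' must be replaced by this corrected description for the argument to be valid as written.
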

\begin{proof}
Set $U'(\delta_x\otimes\e_e)=\left\lbrace\begin{array}{cl}\delta_{t(\alpha_x(e))}\otimes\e_{\beta_x(e)},&\mbox{if\ }t(e)=x;\\\delta_x\otimes\e_e,&\mbox{otherwise.}\end{array}\right.$

For each $x\in X$, Let $v_x$ be a unitary operator on $\delta_x\otimes H$ such that it interchanges $\alpha_x(e)$ with $\beta_x(e)$ when $t(e)=x$, and $v_x(\delta_x\otimes \e_e)=\delta_x\otimes\e_e$ when $t(e)\neq x$. Let $V$ be the diagonal operator on $l^2(X)\otimes H$ with diagonal entries equal to $v_x$, $V=\oplus_{x\in X}v_x$. Then $U'=V^*U_\gamma^{(\alpha)}V$. As $v_x-1$ has finite rank, $V-1\in C^*(X)$, and there exists a homotopy that connects $V$ to 1, which lies in the unitalization of $C^*(X)$. Thus, $[U_\gamma^{(\alpha)}]=[U']$. 

For each $e\in G^1$, there are finitely many vertices $x\in X$ such that $t(e)=x$. Let $w_e:l^2(X)\otimes\e_e\to l^2(X)\otimes H$ be an operator that intertwines $t(\alpha_x(e))$ with $t(\beta_x(e))$ when $x=t(e)$, and $w_e(x)=x$ when $x\neq t(e)$.   
Set $W=\oplus_{e\in G^1}w_e$. It is unitary, and $U_\gamma^{(\beta)}=W^*U'W$. Also $W-1\in C^*(X)$ as it is locally compact of finite propagation. As $W$ is an infinite direct sum of unitaries, each of which is unit plus a finite rank operator, there is a homotopy that connects $W$ and 1, hence $[U']=[U_\gamma^{(\beta)}]$.
\end{proof}

\begin{cor}
The map $\varphi_1:H_1^{BM}(\Gamma)\to K_1(C^*(X))$ given by $\varphi_1(\gamma)=[U_\gamma]$ is well defined.

\end{cor}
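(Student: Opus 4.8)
The plan is to show that the element $[U_\gamma]$ produced by the construction depends only on the cycle $\gamma$, not on the auxiliary data. First I would point out that, in contrast with $\varphi_0$, there is no boundary relation to verify: since $H_1^{BM}(\Gamma)=\ker d$ is literally the group of $1$-cycles, $\gamma$ is its own representative, so ``well defined'' means exactly that $[U_\gamma]$ is insensitive to the choices entering the definition of $U_\gamma$. That $[U_\gamma]$ is a legitimate element of $K_1(C^*(X))$ is already in hand: by the Lemma above $U_\gamma$ is unitary and $U_\gamma-1\in C^*(X)$, so $U_\gamma$ is a unitary in the unitalization $\widetilde{C^*(X)}$ and defines a class in $K_1$.

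It then remains to handle three choices: (i) the orientation of $\Gamma$ used to make all $\gamma_e$ non-negative and the passage to the expanded graph $G$; (ii) the bijections $\alpha_x\colon I(x)\to O(x)$; (iii) the labelled orthonormal basis $\{\e_e\}_{e\in G^1}$ of $H$. For (ii) there is nothing to do: this is precisely the content of the preceding Lemma. For (i), I would note that once $\gamma$ is fixed, each edge with $\gamma_e\neq 0$ has a unique orientation making its coefficient positive, while edges with $\gamma_e=0$ contribute no copies to $G$; hence $G$ is canonically attached to $\gamma$ as an oriented multigraph, up to a relabeling of the parallel copies of each edge, i.e.\ a bijection $G^1\to (G')^1$ commuting with $s$ and $t$. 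Such a relabeling, and likewise a change of basis in (iii), is implemented by conjugating $U_\gamma$ by a unitary of the form $1\otimes u$ with $u$ a unitary of $H$ (after transporting the $\alpha_x$ accordingly, and then using the Lemma of (ii) to restore any desired bijections). So the task reduces to showing that conjugation by $1\otimes u$ fixes the $K_1$-class.

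For that, I would verify directly that $1\otimes u$ commutes with every $M_f$, $f\in C_c(X)$, and preserves both the finite-propagation and the local-compactness conditions, so that it normalizes the $*$-algebra of locally compact finite-propagation operators and hence its closure $C^*(X)$. Picking a norm-continuous path $u_t$ of unitaries of $H$ from $1$ to $u$ — available since the unitary group of $\mathbb B(H)$ is norm-connected — the operators $V_t=1+(1\otimes u_t)(U_\gamma-1)(1\otimes u_t)^*$ form a norm-continuous path of unitaries in $\widetilde{C^*(X)}$ joining $U_\gamma$ to its conjugate, whence the two give the same element of $K_1(C^*(X))$. Combining this with the Lemma of (ii) shows $\varphi_1$ is well defined. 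The one genuinely delicate point is the last one: $1\otimes u$ itself does not lie in $\widetilde{C^*(X)}$, so one cannot simply quote homotopy-invariance of $K_1$ inside the algebra; the normalization property of $1\otimes u$ together with connectedness of $U(\mathbb B(H))$ is exactly what is needed to get around this. (I would flag separately that the stronger assertion that $\varphi_1$ is a group homomorphism is subtler, since multiplicities on an edge add with signs and cancellations occur, but the statement at hand asks only for well-definedness.)
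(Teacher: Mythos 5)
Your argument is correct and follows the paper's route: the corollary is stated there as an immediate consequence of the preceding lemma, which disposes of the choice of the bijections $\alpha_x$, and you invoke that lemma for exactly that purpose. The additional care you take with the remaining auxiliary choices (orientation, relabeling of parallel copies of edges, and the labelled basis of $H$) — handled by checking that $1\otimes u$ normalizes $C^*(X)$ and by the explicit norm-continuous path $V_t=(1\otimes u_t)U_\gamma(1\otimes u_t)^*$ of unitaries in the unitalization — is sound and fills in details the paper leaves implicit.
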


\begin{lem}
If $X$ has bounded geometry and $\gamma$ is uniformly finite then there exists $n\in\mathbb N$ and a unitary $\tilde U_\gamma\in M_n(C^*_u(X))$ such that $[\tilde U_\gamma]=[U_\gamma]$ in $K_1(C^*(X))$. 

\end{lem}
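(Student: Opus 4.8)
The plan is to obtain $\tilde U_\gamma$ by compressing $U_\gamma$ to a coefficient space of bounded dimension. First I would record the consequence of the hypotheses that makes this possible: by bounded geometry there is a uniform bound on the valence of the vertices of $\Gamma$, and since $\gamma$ is uniformly finite there is $K$ with $\gamma_e<K$ for all $e$; hence $n_x=\sum_{e\in\Gamma^1,\,t(e)=x}\gamma_e\le N$ for some $N\in\mathbb N$ independent of $x$. Next I would isolate the part of $l^2(X)\otimes H$ on which $U_\gamma$ is not the identity: the closed subspace $\mathcal K=\overline{\Span}\{\delta_{t(e)}\otimes\e_e:e\in G^1\}$ is $U_\gamma$-invariant, $U_\gamma$ equals the identity on $\mathcal K^\perp$, and for the grading $\mathcal K=\bigoplus_x\mathcal K_x$ with $\mathcal K_x=\Span\{\delta_x\otimes\e_e:t(e)=x\}$ (so that $\dim\mathcal K_x=n_x\le N$) the restriction $U_\gamma|_{\mathcal K}$ has propagation $\le1$, since $\alpha_x(e)$ is an edge issuing from $x$.

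Second, I would construct $\tilde U_\gamma$. For each $x$ choose an isometry of $\mathcal K_x$ onto the span of the first $n_x$ coordinates of $\mathbb C^N$; these assemble into an $X$-graded isometry $\psi\colon\mathcal K\to l^2(X)\otimes\mathbb C^N$ whose range projection $Q:=\psi\psi^*=\bigoplus_x p_{n_x}$ is a diagonal, hence finite-propagation, projection in $M_N(C^*_u(X))$. Set $\tilde U_\gamma:=\psi\,(U_\gamma|_{\mathcal K})\,\psi^*+(1-Q)$; as $\psi$, $\psi^*$ have propagation $0$ and $U_\gamma|_{\mathcal K}$ has propagation $\le1$, this is a finite-propagation unitary on $l^2(X)\otimes\mathbb C^N$, and since $C^*_u(X)$ is unital we get $\tilde U_\gamma\in M_N(C^*_u(X))$.

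Third --- the substantive step --- I would check that the image of $[\tilde U_\gamma]$ in $K_1(C^*(X))$ under $i$ equals $[U_\gamma]$. After applying $i$ and identifying $l^2(X)\otimes\mathbb C^N\otimes H$ with $l^2(X)\otimes H$ by an $X$-graded unitary, $i_*[\tilde U_\gamma]$ is represented by a unitary $\hat U$ on $l^2(X)\otimes H$ which, just like $U_\gamma$ itself, is the identity off an $X$-graded subspace and, on that subspace, agrees with an $X$-graded isometric copy of $U_\gamma|_{\mathcal K}$. Hence it suffices to show: for any two $X$-graded isometries $\iota_1,\iota_2\colon\mathcal K\to l^2(X)\otimes H$, the unitaries $W_j:=\iota_j(U_\gamma|_{\mathcal K})\iota_j^*+(1-\iota_j\iota_j^*)$ satisfy $[W_1]=[W_2]$. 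To see this, note $s:=\iota_2\iota_1^*\in C^*(X)$ (propagation $0$, finite rank on each $\delta_x\otimes H$), and since $\dim\mathcal K_x\le N$ the $x$-component of $s$, a surjective isometry $\iota_1(\mathcal K_x)\to\iota_2(\mathcal K_x)$, extends to a unitary $u_x$ of $\delta_x\otimes H$ that is the identity off $\iota_1(\mathcal K_x)+\iota_2(\mathcal K_x)$, so $u_x-1$ has rank $\le2N$; then $u:=\bigoplus_x u_x$ satisfies $u-1\in C^*(X)$ and is homotopic to $1$ in the unitary group of the unitalization (by the argument used for $V$ in the previous lemma), $u\iota_1=s\iota_1=\iota_2$, and therefore $uW_1u^*=W_2$, giving $[W_1]=[W_2]$. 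Applying this with $\iota_1$ the inclusion $\mathcal K\hookrightarrow l^2(X)\otimes H$ (so $W_1=U_\gamma$) and $\iota_2$ the embedding coming from $\psi$ (so $W_2=\hat U$) completes the proof.

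The step I expect to be the main obstacle is the third one: one has to move back and forth between the finite-dimensional coefficient space $\mathbb C^N$ of $\tilde U_\gamma$ and the infinite-dimensional $H$ of $U_\gamma$, across the non-unital embedding $i$ and the various $X$-graded identifications, and verify that none of these choices affects the class. The extension-of-partial-isometry device is what makes it work, and it relies on the bound $\dim\mathcal K_x\le N$ --- this both places $\tilde U_\gamma$ in the fixed matrix algebra $M_N(C^*_u(X))$ and keeps each correcting unitary $u_x$ a bounded-rank perturbation of the identity, so that $u-1$ stays in $C^*(X)$. The remaining points --- that $\tilde U_\gamma$, $s$ and $u$ really are locally compact of finite propagation --- are routine, in the spirit of the preceding lemmas.
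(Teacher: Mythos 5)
Your proposal is correct and follows essentially the same route as the paper: the paper conjugates $U_\gamma$ by a block-diagonal unitary $T=\oplus_x T_x$, where $T_x$ permutes the basis of $H_x$ so as to move the ``active'' subspace spanned by $\{\e_e: e\in I(x)\}$ into the first $n$ coordinates, which is exactly your compression isometry $\psi$ together with your correcting unitary $u=\oplus_x u_x$ (a uniformly bounded-rank perturbation of the identity in each block, hence in the unitalization of $C^*(X)$ and connected to $1$). Your write-up is somewhat more explicit about the invariant subspace $\mathcal K$ and the $K$-theoretic identifications across the embedding $i$, but the underlying mechanism is identical.
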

\begin{proof}
Bounded geometry of $X$ and uniform boundedness of $\gamma$ imply that there exists $n\in\mathbb N$ such that the valence of any vertex $x\in X$ (the number of edges $e\in G^1$ such that $s(e)=x$ or $t(e)=x$) does not exceed $n$. As the set $G^1$ is contaable, we may number them by integers, i.e. $G^1=\{e_i\}_{i\in\mathbb N}$. For each $x\in X$ define a unitary operator $T_x:H_x\to H_x$ by the following permutation of edges: $T_x$ interchanges the first $i(x)$ edges with the edges in $I(x)$, and keeps other edges ($\{e_1,\ldots,e_{i(x)}\}\cap G^1\setminus I(x)$) unchanged. Clearly, $T_x-1$ is of finite rank for each $x\in X$, hence $T=\oplus_{x\in X}T_x$ lies in the unitalization of $C^*(X)$. Set $\tilde U_\gamma=T^* U_\gamma T$. 
If $i>n$ then $T^*U_\gamma T(\delta_x\otimes\e_{e_i})=T^*U_\gamma(\delta_x\otimes e)=T^*(\delta_x\otimes\e_e)=\delta_x\otimes\e_{e_i}$, where $e\notin I(x)$, therefore $\tilde U_\gamma-1\in M_n(C^*_u(X))$. As $T$ lies in the component of 1, $[\tilde U_\gamma]=[U_\gamma]$.
\end{proof}

\begin{cor}
The map $\varphi_1$ restricted to uniformly finite chains induces a map $\varphi'_1:H_1^{uf}(\Gamma)\to K_1(C^*_u(X))$.
\end{cor}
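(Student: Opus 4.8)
The plan is to set $\varphi'_1(\gamma):=[\tilde U_\gamma]\in K_1(C^*_u(X))$, where $\tilde U_\gamma\in M_n(C^*_u(X))$ is the unitary furnished by the preceding lemma; since $C^*_u(X)$ is unital, $\tilde U_\gamma$ — being $1$ plus an element of $M_n(C^*_u(X))$ — is a genuine unitary of $M_n(C^*_u(X))$, so this class makes sense. Granting well-definedness, the remaining properties are easy. The map is additive, because the graph $G$ built from $\gamma_1+\gamma_2$ is the disjoint union of those built from $\gamma_1$ and from $\gamma_2$ (and the bijections $\alpha_x$ may be chosen accordingly), so that after compressing one has $\tilde U_{\gamma_1+\gamma_2}$ unitarily equivalent, by a diagonal permutation unitary connected to $1$, to $\tilde U_{\gamma_1}\oplus\tilde U_{\gamma_2}$; hence $[\tilde U_{\gamma_1+\gamma_2}]=[\tilde U_{\gamma_1}]+[\tilde U_{\gamma_2}]$. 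Moreover $\varphi'_1$ refines $\varphi_1$ in the sense that $i_*\circ\varphi'_1=\varphi_1\circ j$, and this is immediate: the preceding lemma says precisely $[\tilde U_\gamma]=[U_\gamma]$ in $K_1(C^*(X))$, so that $i_*\varphi'_1(\gamma)=i_*[\tilde U_\gamma]=[U_\gamma]=\varphi_1(\gamma)=\varphi_1(j(\gamma))$.

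The real content is thus independence of the auxiliary choices — the bijections $\alpha_x$ of (\ref{alpha}), the enumeration of $G^1$, and the permutations $T_x$ of the preceding lemma. One cannot simply quote the corollary proving $\varphi_1$ well defined, since the embedding $i_*\colon K_1(C^*_u(X))\to K_1(C^*(X))$ need not be injective. Instead I would re-run, inside $M_N(C^*_u(X))$ for $N$ large enough, the two lemmas used to construct $\varphi_1$: that $U_\gamma$ is a well-defined unitary with $U_\gamma-1$ in the Roe algebra, and that $U_\gamma^{(\alpha)}$ and $U_\gamma^{(\beta)}$ are homotopic through unitaries. In each of those arguments the relevant change of choice replaces the operator by $Z^*(\,\cdot\,)Z$ for an explicit $Z$ that is a direct sum of "elementary" unitaries: either $Z=\bigoplus_{x\in X}z_x$ with each $z_x$ a permutation of the finitely many basis vectors sitting over $x$, or $Z=\bigoplus w$ over a bounded-degree local structure, each summand a rotation mixing two vertices at distance $\le 2$ inside a single coordinate of $H$. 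Bounded geometry of $X$ together with uniform finiteness of $\gamma$ bound all these local dimensions, so after stabilizing to a fixed $N$ one gets $Z\in M_N(C^*_u(X))$ (diagonal unitaries with $\ell^\infty$ entries, and more generally finite-propagation unitaries, lie in $M_N(C^*_u(X))$), and $Z$ is connected to $1$ in $U(M_N(C^*_u(X)))$: for diagonal $Z$ write $z_x=\exp(ih_x)$ with $\|h_x\|\le\pi$ and use the norm-continuous path $\bigoplus_x\exp(ith_x)$; for the second type simultaneously scale each rotation angle from $\theta$ to $t\theta$, getting a norm-continuous path of propagation $\le 2$. Hence $[\tilde U_\gamma]\in K_1(C^*_u(X))$ is unchanged, $\varphi'_1$ is well defined, and combined with the first paragraph this proves the corollary.

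The main obstacle I anticipate is organizational rather than conceptual: keeping everything inside one fixed $M_N(C^*_u(X))$ while recycling the arguments that established well-definedness of $\varphi_1$ in $C^*(X)$. Concretely, conjugating the compressed $\tilde U_\gamma$ by the unitaries $V$, $W$, $T$ from those proofs may temporarily involve coordinates beyond the first $n$, so one must either choose the enumerations of $G^1$ with some care or simply enlarge $N$ and keep track of which coordinates each elementary unitary touches. None of this is deep — it works precisely because every unitary in sight moves each basis vector only to a uniformly nearby vertex and within a fibre of uniformly bounded dimension — but it is where the bookkeeping must be done carefully.
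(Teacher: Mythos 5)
Your proposal is correct and follows the same route the paper intends: define $\varphi'_1(\gamma)=[\tilde U_\gamma]$ and check that the class in $K_1(C^*_u(X))$ is independent of the auxiliary choices by redoing the homotopies of the earlier lemmas inside $M_N(C^*_u(X))$, using bounded geometry and uniform finiteness to keep all the conjugating unitaries diagonal-plus-finite-propagation with uniformly bounded local ranks. In fact the paper's own proof is the single sentence that the class of $\tilde U_\gamma$ ``clearly'' does not depend on the ordering of the sets $I(x)$, so your write-up supplies exactly the detail (in particular the correct observation that one cannot deduce this from well-definedness of $\varphi_1$, since $i_*$ need not be injective) that the paper leaves implicit.
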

\begin{proof}
Clearly, the class of $\tilde U_\gamma$ does not depend on the choice of the ordering of the sets $I(x)$.
\end{proof}

\begin{thm}
The maps $\varphi_k$, $k=0,1$, are well defined and can be organized into the commuting diagram
\begin{equation}\label{diagram}
\begin{xymatrix}{
H_k^{uf}(\Gamma)\ar[r]^-{\varphi_k}\ar[d]_-{j}&K_k(C^*_u(X))\ar[d]^-{i}\\
H_k^{BM}(\Gamma)\ar[r]^-{\varphi_k}&K_k(C^*(X)).
}\end{xymatrix}
\end{equation}

\end{thm}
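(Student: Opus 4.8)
The plan is to collect the lemmas and corollaries already proved and then to verify that the diagram commutes; no new construction is needed. Well-definedness of all the maps in (\ref{diagram}) has in fact already been established in the Corollaries above: that $\varphi_0:H_0^{BM}(\Gamma)\to K_0(C^*(X))$ and its uniform version $\varphi'_0:H_0^{uf}(\Gamma)\to K_0(C^*_u(X))$ are well defined rests on the Lemma giving $[f_c]=[g_c]$ when $c=d\gamma$ together with the Lemma placing $f_c,g_c$ in $M_K(C^*_u(X))$, while that $\varphi_1:H_1^{BM}(\Gamma)\to K_1(C^*(X))$ and its uniform version $\varphi'_1:H_1^{uf}(\Gamma)\to K_1(C^*_u(X))$ are well defined rests on the Lemma comparing $U_\gamma^{(\alpha)}$ with $U_\gamma^{(\beta)}$ in the unitalization of $C^*(X)$ together with the Lemma producing $\tilde U_\gamma\in M_n(C^*_u(X))$ (the uniform statements also using the bounded geometry and uniform finiteness hypotheses). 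So it only remains to verify that the two squares in (\ref{diagram}) commute, and I would do this by noting that, after the canonical identifications, the uniform constructions map under $i$ to representatives of the non-uniform ones.

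For $k=0$: if $c=\sum_x c_x\cdot x$ is uniformly finite with $\max_x|c_x|\le K$, then $f_c$ and $g_c$ take values in $M_K\subset\mathbb K(H)$, so they represent both the class $[f_c]-[g_c]\in K_0(C^*_u(X))$ defining the top arrow and, read as diagonal operators in $C^*(X)$, the class $\varphi_0(j(c))$; since $i$ is induced by $\mathbb C\subset H$, it carries the former to the latter, which is commutativity of the square for $k=0$. (Concretely, at each vertex the two diagonal projections one obtains have the same rank $c_x$, so they are Murray--von Neumann equivalent in $C^*(X)$ through a diagonal partial isometry with uniformly finite-rank entries, which lies in $C^*(X)$.) For $k=1$: if $\gamma$ is a uniformly finite $1$-cycle, the Lemma producing $\tilde U_\gamma$ gives $n$ and a unitary $\tilde U_\gamma\in M_n(C^*_u(X))$ with $[\tilde U_\gamma]=[U_\gamma]$ \emph{already} in $K_1(C^*(X))$; since $i$ sends the class $[\tilde U_\gamma]\in K_1(C^*_u(X))$ defining the top arrow to $[\tilde U_\gamma]\in K_1(C^*(X))$, and the latter equals $[U_\gamma]=\varphi_1(j(\gamma))$, this is commutativity of the square for $k=1$.

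I do not expect any genuinely hard step. The only thing requiring care is the bookkeeping of the identifications $M_n\subset\mathbb K(H)$, the matrix amplification $M_n(C^*(X))\cong C^*(X)$, and the embedding $i$, so that the operator produced by the uniform construction really represents the same $K$-theory class as the one produced by the non-uniform construction. This, however, is exactly what the two auxiliary Lemmas were arranged to absorb: the one on $f_c,g_c$ fixes once and for all a matrix algebra over $C^*_u(X)$ containing them, and the one on $\tilde U_\gamma$ delivers the required identity already at the level of $K_1$-classes in $C^*(X)$, so that within the proof of the theorem itself nothing is left to do but read off the two commuting squares.
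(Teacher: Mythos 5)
Your proposal is correct and matches the paper's intent: the paper gives no separate argument for this theorem, treating it as a summary of the preceding lemmas and corollaries, and your proof assembles exactly those results and observes that both squares commute by construction, since the uniform representatives $f_c,g_c$ and $\tilde U_\gamma$ map under $i$ to representatives of the Borel--Moore classes. This is essentially the same (implicit) approach as the paper's.
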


\section{Examples}

\begin{example} 
Let $X=\mathbb Z\subset\mathbb R$  with the standard metric, and $\Gamma=\mathbb R$ with the edges $[n,n+1]$, $n\in\mathbb Z$. This is the Cayley graph of the group $\mathbb Z$.

Denote by $C_b(\mathbb Z,\mathbb Z)$ (resp., by $C(\mathbb Z,\mathbb Z)$) the set of bounded (resp., of all) maps from $\mathbb Z$ to $\mathbb Z$ with the pointwise abelian group operation. We write $\{n_i\}_{i\in\mathbb Z}$ for a map $i\mapsto n$ in $C(\mathbb Z,\mathbb Z)$. The chain groups $C_0^{uf}(\Gamma)$ and $C_0^{BM}(\Gamma)$ are isomorphic to $C_b(\mathbb Z,\mathbb Z)$ and to $C(\mathbb Z,\mathbb Z)$ respectively. For $\{n_i\}_{i\in\mathbb Z}\in C(\mathbb Z,\mathbb Z)$, set $S(\{n_i\})=\{n_{i+1}\}$. If $c=d\gamma$, $c=\sum_{i\in\mathbb Z}c_i i$, then, clearly, $S(\{c_i\})=\{c_i\}$. If $S(\{c_i\})=\{c_i\}$ for a chain $c$ then $c=d\gamma$ for $\gamma=\sum_{i\in\mathbb Z}c_i[i,i+1]$. Therefore, $H_0^{uf}(\Gamma)=C_b(\mathbb Z,\mathbb Z)/1-S$, $H_0^{BM}(\Gamma)=C(\mathbb Z,\mathbb Z)/1-S$, and the latter group vanishes, as any two-sided sequence can be written in the form $\{n_i\}-S(\{n_i\})$ for some unbounded sequence $\{n_i\}_{i\in\mathbb Z}$.  

If $\gamma=\sum_{i\in\mathbb Z}\gamma_i[i,i+1]\in C_1^{BM}(\Gamma)$ satisfies $d\gamma=0$ then $\gamma_i=\gamma_j$ for any $i,j\in\mathbb Z$, hence $H_1^{BM}(\Gamma)\cong\mathbb Z$. Similarly, $H_1^{uf}(\Gamma)\cong\mathbb Z$. 

Roe algebras of groups are isomorphic to cross products \cite{Roe}: $C^*_u(G)\cong l^\infty(G,\mathbb C)\rtimes G$, $C^*(G)\cong l^\infty(G,\mathbb K)\rtimes G$, where $\mathbb K$ denotes the algebra of compact operators. For $G=\mathbb Z$ one can use the Pimsner--Voiculescu exact sequence \cite{P-V} to calculate $K$-groups. As $K_0(l^\infty(G,\mathbb C))\cong C_b(\mathbb Z,\mathbb Z)$, $K_0(l^\infty(G,\mathbb K))\cong C(\mathbb Z,\mathbb Z)$, $K_1(l^\infty(G,\mathbb C))=K_0(l^\infty(G,\mathbb K))=0$, we have that $K_0(C^*_u(\mathbb Z))=C_b(\mathbb Z,\mathbb Z)/1-S$, $K_0(C^*(\mathbb Z))=C(\mathbb Z,\mathbb Z)/1-S$, $K_1(C^*_u(\mathbb Z))=K_1(C^*(\mathbb Z))\cong\mathbb Z$. 

For both $k=0$ and $k=1$ the maps $\varphi_k$ in the diagram (\ref{diagram}) are isomorphisms between $H_k^{uf}(\Gamma)$ and $K_k(C^*_u(\mathbb Z))$ and between $H_k^{BM}(\Gamma)$ and $K_k(C^*(\mathbb Z))$. 

\end{example}

\begin{example} 
Let $X=\mathbb Z$ with the standard metric, and let $\Gamma$ be the graph with the vertices in $X$ and with no edges. The $K$-theory groups are the same. As there are no edges, $H_1^{uf}(\Gamma)=H_1^{BM}(\Gamma)=H_0^{BM}(\Gamma)=0$, and $H_0^{uf}(\Gamma)\cong C_b(\mathbb Z,\mathbb Z)$. All the maps $\varphi_k$ in the diagram (\ref{diagram}) are trivial, except the map $\varphi_0:H_0^{uf}(\Gamma)\to K_0(C^*_u(\mathbb Z))$, which is the quotient map $C_b(\mathbb Z,\mathbb Z)\to C_b(\mathbb Z,\mathbb Z)/1-S$. 

\end{example}

\begin{example} 
Note that the range of the map $\varphi_0$ lies in the range of the map $l^\infty(X)\to C^*_u(X)\subset C^*(X)$, therefore it cannot contain the higher dimension classes, hence $\varphi_0$ is not surjective for e.g. the Cayley graph of the group $\mathbb Z^2$ (cf. \cite{L-W}).

\end{example}

%\medskip

%Given a metric space $X$ and the family of graphs $\Gamma_\alpha$, $\alpha\in(0,\infty)$, 
%the canonical inclusion $\Gamma_\alpha\to\Gamma_\beta$ for $\alpha<\beta$ allows to define the direct limit groups %$\lim_{\alpha}H_k^{uf}(\Gamma_\alpha)$ and $\lim_{\alpha\to\infty}H_k^{BM}(\Gamma_\alpha)$, and the corresponding direct %limit homomorphisms $\tilde\varphi_k$ to $K$-theory, $k=0,1$. 

\end{document}